\documentclass[12pt]{amsart}

\makeatletter
\@namedef{subjclassname@2020}{%
  \textup{2020} Mathematics Subject Classification}
\makeatother

\usepackage{amsmath,amssymb,amsthm,mathrsfs}
\usepackage{hyperref}
\usepackage{mathrsfs}

\numberwithin{equation}{section}

\newtheorem{theorem}{Theorem}[section]
\newtheorem{proposition}[theorem]{Proposition}

\newtheorem{remark}[theorem]{Remark}
\newtheorem{example}[theorem]{Example}
\theoremstyle{definition}
\newtheorem{definition}[theorem]{Definition}

\renewcommand{\ge}{\geqslant}
\renewcommand{\le}{\leqslant}

\begin{document}

\title
[Polylogarithmic Analogues of Euler's Constant]
{Polylogarithmic Analogues of Euler's Constant}


\author[T. Noda]{Takumi Noda}
\address{College of Engineering\\ 
Nihon University \\
963--8642 Fukushima, Japan}
\email{noda.takumi@nihon-u.ac.jp}

\date{}

\begin{abstract}
We introduce a family of constants
\[
C_m
:=
\lim_{n\to\infty}
\left(
\sum_{k=1}^n
\operatorname{Li}_m\!\left(\frac1k\right)
-
\log n
\right),
\]
which may be regarded as polylogarithmic analogues
of Euler's constant.
We study their basic properties and derive
representations in terms of iterated logarithmic integral structures
associated with the gamma function.
We further introduce associated
polylogarithmic zeta potentials and
polylogarithmic gamma functions,
establish differential relations
and integral representations,
and describe logarithmic branch asymptotics
near the singular points.
As an application,
we relate the constants \(C_m\)
to special values of certain Dirichlet series
involving the Riemann zeta function.
\end{abstract}


\subjclass[2020]{Primary 33B30; Secondary 11M41}

\keywords{
Euler's constant,
polylogarithm,
Riemann zeta-function,
Barnes \(G\)-function,
logarithmic gamma integrals,
multiple gamma functions
}

\maketitle

\section{Introduction}
Among the classical constants
appearing in analytic number theory,
Euler's constant
\[
\gamma
=
\lim_{n\to\infty}
\left(
\sum_{k=1}^n \frac1k
-
\log n
\right)
\]
occupies a distinguished position
through its close connections with
the gamma function,
zeta functions,
and logarithmic phenomena.
For historical background
and modern developments concerning Euler's constant,
see Lagarias \cite{lagarias2013}.
Motivated by the classical definition of \(\gamma\),
for integers \(m\ge2\), 
we introduce the constants 
\begin{equation}
C_m
:=
\lim_{n\to\infty}
\left(
\sum_{k=1}^n
\operatorname{Li}_m\!\left(\frac1k\right)
-
\log n
\right),
\end{equation}
where
\[
\operatorname{Li}_m(z)
=
\sum_{n=1}^{\infty}
\frac{z^n}{n^m}
\]
denotes the polylogarithm.
The constants \(C_m\)
may naturally be viewed as
polylogarithmic analogues of Euler's constant.
Indeed,
the logarithmic divergence
of the partial sums
is analogous to the classical harmonic case,
while the polylogarithmic structure introduces
new connections with zeta values,
iterated integrals,
and logarithmic gamma-type functions.

One of the fundamental identities satisfied by \(C_m\) is
\[
\sum_{k=2}^{\infty}
\frac{\zeta(k)-1}{k^m}
=1-\gamma-\zeta(m)+C_m,
\]
which connects the constants \(C_m\)
with Dirichlet series involving
the Riemann zeta function $\zeta(s)$.
Related Dirichlet series
were studied in \cite{noda2026dirichlet}.
This identity may be viewed as
a direct extension
of Euler's classical identity
\[
\sum_{k=2}^{\infty}
\frac{\zeta(k)-1}{k}
=
1-\gamma.
\]

Motivated by this identity,
we introduce two associated function families:
polylogarithmic zeta potentials
and polylogarithmic gamma functions.
Although the latter are not
Barnes multiple gamma functions
in the usual sense,
they are motivated by a related philosophy:
higher logarithmic structures arise through
recursive procedures associated with
the gamma function.
\[
\begin{array}{ccc}
\textbf{Barnes hierarchy}
&
&
\textbf{Present hierarchy}
\\[0.3em]
\text{difference equations}
& \leadsto &
\text{integral recursion}
\\[0.3em]
\text{factorial recursion}
& \leadsto &
\text{polylogarithmic recursion}
\\[0.3em]
\text{multiple gamma functions}
& \leadsto &
\text{iterated logarithmic gamma functions}
\end{array}
\]
The present framework is distinguished by
its recursive logarithmic-gamma realization,
its associated logarithmic branch structures,
and its connection with
polylogarithmic analogues of Euler's constant.

\medskip

The paper is organized as follows.
In Section~2,
we introduce the polylogarithmic zeta potentials
\[
\mathcal Z_s(z)
:=
\sum_{n=1}^{\infty}
\left\{
\operatorname{Li}_s\!\left(\frac{z}{n}\right)
-
\frac{z}{n}
\right\},
\]
which serve as generating functions
for the constants introduced above.
We study their logarithmic gamma structure
and analytic continuation,
together with the logarithmic branch singularities
arising at
\(
z=1,2,3,\dots
\).
In Section~3,
we introduce the polylogarithmic gamma functions
\(
G_s(z)
\),
which satisfy
\[
G_1(z)
=
e^{-\gamma z}\Gamma(1-z).
\]
We derive iterated integral representations,
differential relations,
and special-value formulas relating \(G_s(z)\)
to the constants \(C_m\).
We also discuss their relation
to classical logarithmic gamma integrals,
including connections with
the Kinkelin function
and Barnes-type multiple gamma functions.
In Section~4,
we investigate arithmetic properties
of the constants \(C_m\),
including asymptotic and summation formulas,
and their relation
to Dirichlet series involving
the Riemann zeta function.

\section{Polylogarithmic Zeta Potentials}
\subsection{Definition and power series expansion}
We begin by introducing
a family of generating functions
built from polylogarithms and zeta values.
\begin{definition}[Polylogarithmic zeta potentials]
For \(s\in\mathbb C\) and \(|z|<1\),
define
\begin{equation}
\mathcal Z_s(z)
:=
\sum_{n=1}^{\infty}
\left\{
\operatorname{Li}_s\!\left(\frac{z}{n}\right)
-
\frac{z}{n}
\right\}.
\end{equation}
\end{definition}
The terminology ``potential''
is used heuristically,
emphasizing the role of
\(\mathcal Z_s(z)\)
as a generating analytic object
encoding zeta values through polylogarithmic layers.
Indeed, the generating structure
becomes apparent
upon expanding the polylogarithm term-by-term:
\[
\operatorname{Li}_s\!\left(\frac{z}{n}\right)
-
\frac{z}{n}
=
\sum_{r=2}^{\infty}
\frac{z^r}{r^s n^r}.
\]
Summing over \(n\),
and interchanging the order of summation
in the region of absolute convergence,
we obtain
\begin{equation}
\mathcal Z_s(z)
=
\sum_{r=2}^{\infty}
\frac{\zeta(r)}{r^s}z^r.
\end{equation}
Hence,
for each fixed \(s\in\mathbb C\),
the function \(\mathcal Z_s(z)\)
is analytic for \(|z|<1\).
Moreover,
since \(\zeta(r)\to1\) exponentially fast
as \(r\to\infty\),
while \(r^{-s}\) contributes only polynomial growth,
the associated power series
has radius of convergence equal to \(1\)
by the Cauchy--Hadamard formula.

Evaluating the polylogarithmic zeta potentials at \(z=1\),
we obtain the associated arithmetic constants.
\begin{definition}[Polylogarithmic Euler constants]
For \(\Re(s)>1\), define
\begin{equation}
C_s
:=
\gamma+\mathcal Z_s(1)
=
\gamma+
\sum_{r=2}^{\infty}
\frac{\zeta(r)}{r^s}.
\end{equation}
For integers \(m\ge2\),
this recovers the constants defined in (1.1).
Equivalently,
\[
C_s
=
\sum_{n=1}^{\infty}\frac{a_n}{n^s},
\qquad
a_n=
\begin{cases}
\gamma & (n=1),\\
\zeta(n) & (n\ge2).
\end{cases}
\]
\end{definition}
Thus the constants \(C_s\)
form a zeta-weighted Dirichlet series.
The following proposition
describes the meromorphic continuation
of \(\mathcal Z_s(1)\).
\begin{proposition}
The function \(\mathcal Z_s(1)\),
and therefore \(C_s\),
extends meromorphically
to the whole complex $s$-plane
with a simple pole at \(s=1\).
\end{proposition}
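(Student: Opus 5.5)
Write $\zeta(r)=1+(\zeta(r)-1)$ and split
\[
\mathcal Z_s(1)=\sum_{r=2}^{\infty}\frac{1}{r^s}+\sum_{r=2}^{\infty}\frac{\zeta(r)-1}{r^s}
=\bigl(\zeta(s)-1\bigr)+D(s),
\qquad
D(s):=\sum_{r=2}^{\infty}\frac{\zeta(r)-1}{r^s}.
\]
For $\Re(s)>1$ both pieces converge absolutely, so the identity holds there. The first piece is handled by the classical meromorphic continuation of $\zeta(s)$ to $\mathbb C$, which has its only singularity at $s=1$, a simple pole with residue $1$.

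The main step is to show that $D(s)$ is entire. From
\[
\zeta(r)-1=\sum_{n\ge2}n^{-r}\le 2^{-r}+\int_2^\infty x^{-r}\,dx=2^{-r}\Bigl(1+\frac{2}{r-1}\Bigr)
\]
we get $0<\zeta(r)-1\le 3\cdot 2^{-r}$ for $r\ge2$. Hence, on any half-plane $\Re(s)\ge -A$,
\[
\left|\frac{\zeta(r)-1}{r^s}\right|\le 3\,r^{A}2^{-r},
\]
which is summable in $r$. Each term $r^{-s}=e^{-s\log r}$ is entire in $s$. The Weierstrass M-test therefore gives locally uniform convergence on $\mathbb C$, and by Weierstrass' theorem $D(s)$ is entire. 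This exponential-decay estimate is the only real point of the argument.

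Consequently $\mathcal Z_s(1)=\zeta(s)-1+D(s)$ and $C_s=\gamma+\zeta(s)-1+D(s)$ extend meromorphically to all of $\mathbb C$. The only possible pole is the simple pole of $\zeta(s)$ at $s=1$. It is not cancelled, since $D$ is entire and so is regular at $s=1$. Thus $s=1$ is a genuine simple pole, with residue $1$.

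As a by-product, the same decomposition evaluated at integers $m\ge2$ gives $D(m)=C_m-\gamma-\zeta(m)+1$. This is the identity stated in the introduction.
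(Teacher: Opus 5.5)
Your proposal is correct and follows essentially the same route as the paper: the split $\mathcal Z_s(1)=\zeta(s)-1+\sum_{r\ge2}(\zeta(r)-1)r^{-s}$, with the second series entire because $\zeta(r)-1=O(2^{-r})$. You supply details the paper leaves implicit — the explicit bound $\zeta(r)-1\le 3\cdot 2^{-r}$, the M-test giving locally uniform convergence, and the check that the entire part cannot cancel the simple pole of $\zeta(s)$ at $s=1$.
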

\begin{proof}
Separating the constant term in \(\zeta(r)\),
we obtain
\[
\mathcal Z_s(1)
=
\zeta(s)-1
+
\sum_{r=2}^{\infty}
\frac{\zeta(r)-1}{r^s}.
\]
Since
\[
\zeta(r)-1
=
O(2^{-r}),
\]
the latter series defines
an entire function of \(s\).
The conclusion follows.
\end{proof}
By analytic continuation,
\(C_s\) will also denote
the resulting meromorphic function.
%
\subsection{Logarithmic gamma structure and analytic continuation}

The special value \(s=1\)
plays a distinguished role.
Indeed,
the classical expansion
\cite[1.17(2)]{erdelyi1953}
\begin{equation}
\log\Gamma(1-z)
=
\gamma z
+
\sum_{r=2}^{\infty}
\frac{\zeta(r)}{r}z^r,
\qquad |z|<1,
\end{equation}
yields
\begin{equation}
\mathcal Z_1(z)
=
\log\Gamma(1-z)-\gamma z,
\qquad |z|<1.
\end{equation}

Define the integral operator
\begin{equation}
(\mathcal I f)(z)
:=
\int_0^z \frac{f(w)}{w}\,dw.
\end{equation}
For integers \(m\ge2\),
the functions \(\mathcal Z_m(z)\)
satisfy the recursive integral relation
\[
\mathcal Z_m(z)
=
(\mathcal I^{m-1}\mathcal Z_1)(z).
\]
\begin{proposition}
The function \(\mathcal Z_1(z)\)
admits an analytic continuation to
\[
\Omega:=\mathbb C\setminus[1,\infty),
\]
where \(\log\Gamma(1-z)\)
is defined using the principal branch
of the logarithm.
For every integer \(m\ge2\),
the function \(\mathcal Z_m(z)\)
extends holomorphically to \(\Omega\)
and admits the finite value
\[
\mathcal Z_m(1)=C_m-\gamma.
\]
In particular,
for every integer \(m\ge2\),
the iterated integral structure regularizes
the logarithmic singularity at \(z=1\),
so that \(\mathcal Z_m(z)\)
extends holomorphically across \(z=1\).
No additional branch points are created,
and the branch structure is inherited from
\(\log\Gamma(1-z)\).
\end{proposition}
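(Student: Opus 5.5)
The plan is to work directly from (2.6), $\mathcal Z_1(z)=\log\Gamma(1-z)-\gamma z$, and to build $\mathcal Z_m$ by applying the operator $\mathcal I$ of (2.7) $m-1$ times. For the first step, recall that $\Gamma(1-z)$ is meromorphic with simple poles exactly at $z=1,2,3,\dots$ and has no zeros. So $\Gamma(1-z)$ is holomorphic and nonvanishing on $\Omega=\mathbb C\setminus[1,\infty)$. Since $\Omega$ is simply connected (indeed star-shaped about $0$), $\Gamma(1-z)$ has a holomorphic logarithm on $\Omega$. We normalize it by $\log\Gamma(1)=0$; it is then real on $(-\infty,1)$, i.e.\ it is the principal branch there. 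This logarithm agrees with the series (2.5) on $|z|<1$, so $\mathcal Z_1$ continues holomorphically to $\Omega$.

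Next, I would proceed by induction on $m$. The function $\mathcal Z_{m-1}$ is holomorphic on $\Omega$ and vanishes to order $2$ at $0$, by the power series (2.3). Hence $\mathcal Z_{m-1}(w)/w$ is holomorphic on $\Omega$. Since $\Omega$ is star-shaped about $0$, the integral $\int_0^z$ may be taken along the segment $[0,z]$ and is path-independent in $\Omega$. This defines a holomorphic $\mathcal Z_m=\mathcal I\mathcal Z_{m-1}$ on $\Omega$. On $|z|<1$, termwise integration of (2.3) gives $\sum_{r\ge2}\zeta(r)r^{-m}z^r$, so this continuation is the same function. No new singularities appear, because integration of a holomorphic function on a simply connected domain produces none.

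The main step is the behaviour at $z=1$. From $\Gamma(1-z)=\Gamma(2-z)/(1-z)$ we get
\[
\mathcal Z_1(z)=-\log(1-z)+h(z),
\]
with $h$ holomorphic in a disc around $1$. Integrating $m-1$ times shows, inductively, that near $z=1$
\[
\mathcal Z_m(z)=P_m(z)+c_m(1-z)^{m-1}\log(1-z)+h_m(z),
\]
where $P_m$ is a polynomial and $h_m$ is holomorphic. The point is that each application of $\mathcal I$ raises the power of $(1-z)$ multiplying the logarithm. In particular, for $m\ge2$ the integrand $\mathcal Z_{m-1}(t)/t$ is integrable on $[0,1]$, with only a logarithmic singularity when $m=2$. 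Hence $\mathcal Z_m(z)$ has a finite limit as $z\to1$ within $\Omega$, namely $\int_0^1\mathcal Z_{m-1}(t)\,t^{-1}\,dt$.

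To identify this limit, note that $\sum_{r\ge2}\zeta(r)r^{-m}$ converges for $m\ge2$. By Abel's theorem, the radial limit of the series (2.3) at $z=1$ equals its sum. That sum is $\mathcal Z_m(1)=C_m-\gamma$ by Definition (2.4).

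I expect the main obstacle to be the precise sense of ``extends holomorphically across $z=1$''. The local expansion above shows that the logarithmic branch at $1$ is softened by a factor $(1-z)^{m-1}$, but it is not removed. So what the argument actually proves is that $\mathcal Z_m$ is continuous, and indeed $C^{m-2}$, up to $z=1$, with the finite value $C_m-\gamma$, and that the only branching is the one inherited from $\log(1-z)$, i.e.\ from $\log\Gamma(1-z)$. I would state the regularization in this form. I would also check the coefficient $c_m$ explicitly, to confirm the claim that no additional branch points are created.
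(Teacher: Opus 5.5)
Your argument is the paper's for every part of the statement that is actually true. You take a holomorphic logarithm of the zero-free $\Gamma(1-z)$ on the simply connected slit plane. You then argue by induction, using that $\mathcal I$ preserves holomorphy on $\Omega$ and vanishing to order $2$ at $0$. Finally you identify the value at $1$ with $\sum_{r\ge2}\zeta(r)r^{-m}=C_m-\gamma$, making explicit the Abel-theorem step that the paper leaves inside its ``therefore''; uniform convergence of the series on $|z|\le1$ would also do.

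At $z=1$ you depart from the paper, and there you are right and the statement is wrong. The paper's proof never proves the clause that $\mathcal Z_m$ extends holomorphically across $z=1$. Its own Section~2.4 gives $\mathcal Z_2(1-\varepsilon)=C_2-\gamma-\varepsilon\log(1/\varepsilon)+O(\varepsilon)$, which no function holomorphic at $1$ can satisfy. The correct conclusion is the one you propose: $\mathcal Z_m$ is holomorphic on $\Omega$ and continuous up to $z=1$ with value $C_m-\gamma$. It is in fact $C^{m-2}$ there, but not $C^{m-1}$.

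Your local analysis needs two corrections.

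\emph{The coefficient of the logarithm.} The coefficient of $\log(1-z)$ is not a monomial $c_m(1-z)^{m-1}$. It is the holomorphic function $g_m(z)=-(\log z)^{m-1}/(m-1)!$, which matches the singular part of $\operatorname{Li}_m$ at $1$. For instance, $\mathcal Z_2(z)=-\log z\,\log(1-z)+(\text{holomorphic})$ near $1$. The induction runs as follows. Suppose $\mathcal Z_m=g_m\log(1-z)+h_m$ near $1$ with $h_m$ holomorphic. Put $G=-(\log z)^m/m!$, so that $G'=g_m/z$ and $G(1)=0$. Then $\frac{d}{dz}\bigl(G(z)\log(1-z)\bigr)=\frac{g_m(z)}{z}\log(1-z)-\frac{G(z)}{1-z}$, and $G/(1-z)$ is holomorphic near $1$. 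Hence $\mathcal Z_{m+1}-G\log(1-z)$ is holomorphic near $1$, so $g_{m+1}=G$. In particular $c_m=(-1)^m/(m-1)!\neq0$.

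\emph{What $c_m\neq0$ shows.} The nonvanishing of $c_m$ is what makes the branch point at $1$ genuine: continuing once around $1$ adds $\pm2\pi i\,g_m\not\equiv0$. It has no bearing on ``no additional branch points are created''. That claim is automatic on $\Omega$ and false beyond it. For $m\ge2$ the added term $\pm2\pi i\,g_m$ is itself branched at $z=0$, so $0$ becomes a branch point on the non-principal sheets, exactly as for $\operatorname{Li}_2$. You should therefore restrict the last sentence of the proposition to the principal branch on $\Omega$.
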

%
\begin{proof}
Since \(\Gamma(1-z)\)
has poles only at
\(
z=1,2,3,\dots
\)
and has no zeros, the function
\(\Gamma(1-z)\) is holomorphic and nonvanishing on
\[
\Omega:=\mathbb C\setminus[1,\infty).
\]
The slit \([1,\infty)\) is introduced
to avoid the poles and fix a single-valued branch
of the logarithm.
Since \(\Omega\) is simply connected, there exists a holomorphic branch of
\[
\log\Gamma(1-z)
\]
on \(\Omega\), 
obtained by analytic continuation
from the principal branch near \(z=0\).
By the expansion (2.4) of \(\log\Gamma(1-z)\) at \(z=0\),
we obtain
\[
\mathcal Z_1(z)=O(z^2)
\qquad (z\to0).
\]
Hence \(\mathcal Z_1(z)/z\) extends holomorphically to \(z=0\).
More generally, if \(f\) is holomorphic on \(\Omega\) and satisfies
\[
f(z)=O(z^2)
\qquad (z\to0),
\]
then \(f(z)/z\) is holomorphic on \(\Omega\). 
Since \(\Omega\)
is simply connected, the integral
\[
(\mathcal I f)(z)
=
\int_0^z \frac{f(w)}{w}\,dw
\]
is path-independent and defines a holomorphic function on \(\Omega\).
Moreover, since \(f(w)/w = O(w)\),
\[
(\mathcal I f)(z)=O(z^2)
\qquad (z\to0).
\]
Thus the assertion follows by induction.
For \(m\ge2\),
the series (2.2)
converges at \(z=1\),
and therefore
\[
\mathcal Z_m(1)
=
\sum_{r=2}^{\infty}
\frac{\zeta(r)}{r^m}
=
C_m-\gamma,
\]
which agrees with the definition (2.3) of \(C_m\).
\end{proof}

The relation between
\(\mathcal Z_1(z)\)
and the logarithmic gamma function
provides the motivation for
the polylogarithmic gamma hierarchy
introduced in Section~3.
\subsection{Singularity structure of \(\mathcal Z_s(z)\)}
The preceding discussion
describes the local analytic behaviour
of \(\mathcal Z_s(z)\) near \(z=0\),
together with its analytic continuation
and logarithmic gamma structure.
We now discuss,
from a heuristic analytic viewpoint,
the expected singularity structure
of \(\mathcal Z_s(z)\).

From the defining series (2.1),
we formally expect that
the singularities of \(\mathcal Z_s(z)\)
are inherited from
the branch structure
of the individual polylogarithm terms.
Upon analytic continuation
in the variable \(z\),
this suggests the appearance
of singularities
at the positive integers
\[
z=1,2,3,\ldots.
\]
These singularities are not poles in the usual sense,
but branch singularities inherited from
the polylogarithmic structure.
Indeed, the term corresponding to \(n=k\) contains
\[
\operatorname{Li}_s\!\left(\frac zk\right),
\]
which possesses the classical branch point
of the polylogarithm at \(1\).
Consequently,
the singularity at \(z=k\)
is expected to exhibit
a polylogarithmic branch behaviour.

More precisely,
for non-integral \(s\),
the local singular behaviour near \(z=k\)
is governed by
\[
\Gamma(1-s)
\left(
-\log\frac zk
\right)^{s-1},
\]
in accordance with
the expansion of the polylogarithm
near \(1\)
(cf.~\cite[(9.3)]{wood1992};
see also \cite{lewin1981})).
The case \(s=1\) is exceptional,
since
\[
\operatorname{Li}_1(z)
=
-\log(1-z),
\]
so that the singularity reduces
to a pure logarithmic branch point.
For integral values \(s\ge2\),
the fractional logarithmic branch structure
reduces to ordinary logarithmic branch singularities.
Thus, from a global viewpoint,
the sequence
\[
z=1,2,3,\ldots
\]
may be regarded heuristically
as locations of
polylogarithmic branch points.
This viewpoint motivates
the use of the slit domain
\[
\Omega=\mathbb C\setminus[1,\infty),
\]
which avoids the branch singularities
arising from the polylogarithmic structure.
%
%
\subsection{Logarithmic asymptotics near \(z=1\)}
The logarithmic singularity of
\(\mathcal Z_1(z)\) at \(z=1\)
is inherited by the functions
\(\mathcal Z_m(z)\)
through the iterated integral construction,
with the singular behaviour becoming progressively weaker
as \(m\) increases.
More precisely,
iterated integration transforms the
logarithmic divergence of
\(\mathcal Z_1(z)\)
into weaker logarithmic branch terms
with progressively higher vanishing order.

As \(z\to1^{-}\),
the classical pole behaviour
\[
\Gamma(1-z)\sim \frac1{1-z}
\]
implies
\[
\log\Gamma(1-z)
=
-\log(1-z)+O(1).
\]
Combining this with (2.5), we obtain
\begin{equation}
\mathcal Z_1(z)
=
-\log(1-z)+O(1)
\qquad
(z\to1^-).
\end{equation}
Moreover,
since
\[
\mathcal Z_m
=
\mathcal I^{m-1}\mathcal Z_1,
\]
repeated integration progressively weakens
the logarithmic singularity at \(z=1\).
Formally,
successive integrations
of the leading logarithmic term
\[
-\log(1-z)
\]
produce factors of \(1-z\)
together with milder logarithmic terms,
thereby softening the singular behaviour
through the iterated integral hierarchy.

For example,
since
\[
\mathcal Z_2(z)
=
\int_0^z \frac{\mathcal Z_1(w)}{w}\,dw,
\]
we have
\[
\mathcal Z_2(1)-\mathcal Z_2(1-\varepsilon)
=
\int_{1-\varepsilon}^{1}
\frac{\mathcal Z_1(w)}{w}\,dw.
\]
Using (2.7), we obtain
\[
\mathcal Z_2(1)-\mathcal Z_2(1-\varepsilon)
=
\int_{1-\varepsilon}^{1}
\frac{-\log(1-w)+O(1)}{w}\,dw.
\]
Putting \(t=1-w\), this becomes
\[
\int_0^\varepsilon
\frac{-\log t+O(1)}{1-t}\,dt
=
\int_0^\varepsilon
\left(
-\log t+O(1)
\right)\,dt
=
\varepsilon\log\frac1\varepsilon
+
O(\varepsilon).
\]
Since
\[
\mathcal Z_2(1)=C_2-\gamma,
\]
we obtain
\[
\mathcal Z_2(1-\varepsilon)
=
C_2-\gamma
-
\varepsilon\log\frac1\varepsilon
+
O(\varepsilon),
\qquad
(\varepsilon\to0^+).
\]
Thus,
iterated integration weakens
the logarithmic divergence.
Although the divergent logarithmic singularity
occurs only for \(m=1\),
logarithmic branch behaviour
persists for all \(m\ge2\).
%
%
%
%
%
%
\section{Polylogarithmic Gamma Hierarchies}
\subsection{Definition and Recursive Integral Structure}
In view of (2.4) and (2.6),
it is natural to construct
a hierarchy generated
from the logarithmic gamma function
through repeated application
of the logarithmic integral operator.
Motivated by this structure,
we introduce
the following hierarchy
of polylogarithmic gamma functions.
\begin{definition}[Polylogarithmic gamma functions]
For \(s\in\mathbb C\),
define
\begin{equation}
G_s(z)
:=
\exp\left(
\sum_{r=2}^{\infty}
\frac{\zeta(r)}{r^s}z^r
\right),
\qquad |z|<1.
\end{equation}
\end{definition}
\begin{remark}
For each fixed \(z\) with \(|z|<1\),
the defining series converges absolutely
and defines an entire function of \(s\).
By definition (3.1),
\[
\log G_m(z)=\mathcal Z_m(z)
\]
together with Proposition~2.2
yields an analytic continuation
of \(G_m(z)\)
to the domain \(\Omega\).
\end{remark}
The terminology
``polylogarithmic gamma functions''
reflects the identity
\[
G_1(z)
=
e^{-\gamma z}\Gamma(1-z),
\]
together with the recursive structure
induced by repeated application
of the logarithmic integral operator
\[
(\mathcal I f)(z)
=
\int_0^z \frac{f(w)}{w}\,dw.
\]
This is analogous to the classical hierarchy
of polylogarithms,
whose adjacent orders are related
through the Euler differential operator
\[
\left(
z\frac{d}{dz}
\right)
\operatorname{Li}_{s+1}(z)
=
\operatorname{Li}_s(z).
\]
Since
\[
\log G_s(z)
=
\mathcal Z_s(z)
\]
by definition (3.1),
the family satisfies
the recursive relation
\[
\log G_{s+1}(z)
=
\mathcal I
(\log G_s)(z).
\]

Thus,
the functions \(G_s(z)\)
may be regarded as
a polylogarithmic hierarchy
generated from
the logarithmic gamma function.
For integer values \(m\ge2\),
the functions \(G_m(z)\)
are obtained from the base function
\(G_1(z)\)
through repeated application
of the operator \(\mathcal I\).
Indeed,
\[
\log {G}_m(z)
=
\mathcal I^{m-1}
(\log {G}_1)(z).
\]
For example,
\[
\log {G}_2(z)
=
\int_0^z
\frac{
\log\Gamma(1-w)-\gamma w
}{w}
\,dw,
\]
while
\[
\log {G}_3(z)
=
\int_0^z
\frac1u
\left(
\int_0^u
\frac{
\log\Gamma(1-w)-\gamma w
}{w}
\,dw
\right)\,du.
\]
In view of its logarithmic gamma integral structure,
the function \(G_2(z)\)
may be regarded as
a polylogarithmic analogue
of the Barnes \(G\)-function.
Further comparisons with
the classical Barnes hierarchy
will be given in Section~3.3.
%
%
\subsection{Integral Representations and Differential Relations}
The iterated logarithmic integral representation
admits a natural extension
to complex parameters \(s\)
with \(\Re(s)>1\).
\begin{theorem}[Integral representation of \(G_s(z)\)]
For \(\Re(s)>1\) and \(z\in\Omega\),
\[
\log G_s(z)
=
\frac1{\Gamma(s-1)}
\int_0^z
\frac{
\log\Gamma(1-w)-\gamma w
}{w}
\left(
\log\frac zw
\right)^{s-2}
\,dw,
\]
where the integral is taken along the line segment
joining \(0\) and \(z\).
\end{theorem}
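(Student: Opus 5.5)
The plan is to prove the identity first for $|z|<1$ by comparing power series, and then extend it to all of $\Omega$ by analytic continuation.

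\textbf{Step 1: the model integral.} For $\Re(s)>1$ and an integer $r\ge1$, substitute $w=zt$ along the segment, so that $\log(z/w)=-\log t$. Then set $t=e^{-u}$. This gives
\[
\frac1{\Gamma(s-1)}\int_0^z w^{r-1}\Bigl(\log\frac zw\Bigr)^{s-2}dw
=\frac{z^r}{\Gamma(s-1)}\int_0^\infty e^{-ru}u^{s-2}\,du
=\frac{z^r}{r^{s-1}}.
\]
This is Euler's integral for $\Gamma$. It needs $\Re(s-1)>0$, which is exactly where the hypothesis $\Re(s)>1$ enters. The same computation with $|\cdot|$ in place of the integrand gives $|z|^r/r^{\Re s-1}\cdot\Gamma(\Re s-1)/|\Gamma(s-1)|$.

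\textbf{Step 2: the case $|z|<1$.} By (2.5) and (2.2) with $s=1$,
\[
\frac{\log\Gamma(1-w)-\gamma w}{w}=\sum_{r\ge2}\frac{\zeta(r)}{r}w^{r-1}.
\]
Insert this into the integral and integrate term by term. This is justified by Fubini or Tonelli: by the absolute bound from Step 1, the sum of absolute values is dominated by a constant times $\sum_r \zeta(r)|z|^r/r^{\Re s}<\infty$. Step 1 then gives
\[
\sum_{r\ge2}\frac{\zeta(r)}{r}\cdot\frac{z^r}{r^{s-1}}=\sum_{r\ge2}\frac{\zeta(r)}{r^s}z^r=\log G_s(z)
\]
by (3.1).

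\textbf{Step 3: extension to $\Omega$.} This is the main obstacle. One must show that the right-hand side is holomorphic in $z\in\Omega$, and that the left-hand side $\mathcal Z_s(z)$ has a holomorphic continuation to $\Omega$ for non-integer $s$, which Proposition~2.2 provides only for integer $s$. I would take the right-hand side as the definition of the continuation of $\log G_s$ on $\Omega$.

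Write it as
\[
F(z)=\frac1{\Gamma(s-1)}\int_0^1 h(zt)\,(-\log t)^{s-2}\,\frac{dt}{t}\cdot zt,
\qquad h(w)=\frac{\mathcal Z_1(w)}{w}.
\]
The segment $\{zt:0\le t\le 1\}$ stays in $\Omega$ because $\Omega$ is star-shaped with respect to $0$. By Proposition~2.2, $h$ is holomorphic on $\Omega$ with $h(w)=O(w)$ near $0$. The integrand is therefore holomorphic in $z$, and on compact subsets of $\Omega$ it is bounded by $C\,t\,(-\log t)^{\Re s-2}$, which is integrable on $(0,1)$ when $\Re s>1$. Standard differentiation under the integral sign (or Morera's theorem) then shows that $F$ is holomorphic on $\Omega$.

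Since $F$ agrees with $\log G_s$ on the disc by Step 2, $F$ is the unique analytic continuation of $\log G_s$ to $\Omega$. This is consistent with the Remark and Proposition~2.2 for integer $s=m\ge2$, where the formula reduces to Cauchy's formula for repeated integration applied to $\mathcal I^{m-1}\mathcal Z_1$.
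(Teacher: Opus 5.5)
Your proposal is correct and follows essentially the same route as the paper. Both verify the identity for $|z|<1$ by substituting the power series of $(\log\Gamma(1-w)-\gamma w)/w$ and evaluating each term with $w=ze^{-u}$ and Euler's integral, then show the right-hand side is holomorphic on $\Omega$ via a locally uniform bound and conclude by the identity theorem. Your explicit Tonelli bound and your use of the star-shapedness of $\Omega$ make rigorous what the paper only asserts; the paper additionally checks convergence at the boundary point $z=1$, which this statement does not need but which is used later for $C_s$.
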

\begin{remark}
By Definition~3.1, 
\(G_s(z)\) is entire in \(s\) for each fixed \(z\) with \(|z|<1\).
The condition \(\Re(s)>1\)
is required for the integral representation,
which provides an analytic continuation
of \(G_s(z)\) to \(\Omega\).
In particular,
the value at \(z=1\)
is well defined for \(\Re(s)>1\),
and is obtained by taking the radial limit
\(z\to1^{-}\).
\end{remark}
\begin{proof}
Temporarily, we assume \(|z|<1\).
By (2.4)
\[
\frac{
\log\Gamma(1-w)-\gamma w
}{w}
=
\sum_{r=2}^{\infty}
\frac{\zeta(r)}{r}w^{r-1}.
\]
Since the series converges absolutely for \(|w|<1\)
and the kernel
\[
\left(\log\frac zw\right)^{s-2}
\]
is integrable along the line segment for \(\Re(s)>1\),
term-by-term integration is justified.
Substituting the expansion above
into the right-hand side
of the asserted integral formula,
we get
\[
\frac1{\Gamma(s-1)}
\sum_{r=2}^{\infty}
\frac{\zeta(r)}{r}
\int_0^z
w^{r-1}
\left(
\log\frac zw
\right)^{s-2}
\,dw.
\]
Along the line segment joining \(0\) and \(z\), let
\[
w=ze^{-u}.
\]
Then
\[
dw=-ze^{-u}\,du,
\]
and hence
\[
\int_0^z
w^{r-1}
\left(
\log\frac zw
\right)^{s-2}
\,dw
=
z^r
\int_0^\infty
e^{-ru}u^{s-2}\,du.
\]
Since \(\Re(s)>1\),
the last integral equals
\[
\frac{\Gamma(s-1)}{r^{\,s-1}}.
\]
Therefore, we recover the following series: 
\[
\sum_{r=2}^{\infty}
\frac{\zeta(r)}{r}
\frac{z^r}{r^{\,s-1}}
=
\sum_{r=2}^{\infty}
\frac{\zeta(r)}{r^s}z^r
=
\log G_s(z).
\]
This proves the identity for \(|z|<1\).

It remains to show that the integral
extends to a well-defined holomorphic function
on \(\Omega\).
Near \(w=0\),
\[
\frac{\log\Gamma(1-w)-\gamma w}{w}
=
O(w).
\]
Hence the integrand is locally integrable at \(w=0\)
for every \(\Re(s)>1\).
If \(z\neq1\),
the integrand is analytic along the line segment
joining \(0\) and \(z\).
When \(z=1\),
using
\[
\log\Gamma(1-w)
=
-\log(1-w)+O(1)
\qquad (w\to1),
\]
and
\[
\log\frac1w
\sim 1-w,
\]
we obtain
\[
\frac{\log\Gamma(1-w)-\gamma w}{w}
\left(
\log\frac1w
\right)^{s-2}
=
O\!\left(
-\log(1-w)
(1-w)^{\Re(s)-2}
\right),
\]
which is integrable for \(\Re(s)>1\).

Therefore the integral is well defined for all
\(z\in\Omega\).
Moreover, the convergence is locally uniform on compact
subsets of \(\Omega\),
so the integral defines a holomorphic function on
\(\Omega\).
Since it agrees with
\(\log G_s(z)\)
for \(|z|<1\),
the identity theorem yields the asserted continuation.
\end{proof}
%
Thus,
the iterated logarithmic integral representation
admits a natural extension
to complex order parameters \(s\).
Moreover,
the recursive structure
yields a differential relation
between adjacent orders.
%
\begin{proposition}
For \(|z|<1\) and \(s\in\mathbb C\),
\[
z\frac{d}{dz}
\log G_{s+1}(z)
=
\log G_s(z).
\]
\end{proposition}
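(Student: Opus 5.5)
The identity follows from termwise differentiation of the power series in Definition~3.1. By definition,
\[
\log G_{s+1}(z)=\sum_{r=2}^{\infty}\frac{\zeta(r)}{r^{s+1}}z^r,\qquad |z|<1,
\]
and this is the power series $\mathcal Z_{s+1}(z)$ of (2.2). For every fixed $s\in\mathbb C$ its radius of convergence is $1$, as noted after (2.2): the coefficients $\zeta(r)r^{-s-1}$ satisfy $\limsup|\zeta(r)r^{-s-1}|^{1/r}=1$, because $\zeta(r)\to1$ and $|r^{-s-1}|^{1/r}\to1$.

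A power series may be differentiated term by term inside its disc of convergence, and the differentiated series has the same radius of convergence. We therefore get, for $|z|<1$,
\[
\frac{d}{dz}\log G_{s+1}(z)=\sum_{r=2}^{\infty}\frac{\zeta(r)}{r^{s+1}}\,r z^{r-1}.
\]
Multiplying by $z$ gives
\[
z\frac{d}{dz}\log G_{s+1}(z)=\sum_{r=2}^{\infty}\frac{\zeta(r)}{r^{s}}z^{r}=\log G_s(z),
\]
again by Definition~3.1. Equivalently, the operator $z\,d/dz$ acts on monomials by $z^r\mapsto r z^r$, so it lowers the weight $r^{-s-1}$ to $r^{-s}$. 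This mirrors the classical relation $(z\,d/dz)\operatorname{Li}_{s+1}=\operatorname{Li}_s$.

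No step is a real obstacle. The only point needing care is that $\log G_s$ means the specific branch given by the convergent series in Definition~3.1, not an arbitrary logarithm of $G_s$. Once this is fixed, the argument holds for all complex $s$ simultaneously, with no restriction such as $\Re(s)>1$. One could also remark that, by the identity theorem and the continuation in Proposition~2.2 and Theorem~3.3, the relation persists on $\Omega$ whenever both sides are defined there. The statement, however, only asks for $|z|<1$.
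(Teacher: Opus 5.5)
Your proposal is correct and follows the paper's proof: both write $\log G_{s+1}(z)$ as the power series from Definition~3.1, differentiate term by term, and multiply by $z$ to turn the coefficient $r^{-s-1}$ into $r^{-s}$. Your extra justification is a welcome addition to the paper's two-line argument: you check that the radius of convergence is $1$ for every $s$, and you note that $\log G_s$ means the series itself rather than an arbitrary branch.
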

\begin{proof}
By definition,
\[
\log G_{s+1}(z)
=
\sum_{r=2}^\infty
\frac{\zeta(r)}{r^{s+1}}z^r.
\]
Differentiating term-by-term,
we obtain
\[
z\frac d{dz}\log G_{s+1}(z)
=
\sum_{r=2}^\infty
\frac{\zeta(r)}{r^s}z^r
=
\log G_s(z),
\]
which proves the assertion.
\end{proof}
%
By Proposition~2.3,
the polylogarithmic constants \(C_s\)
extend meromorphically
to the whole complex \(s\)-plane
with a simple pole at \(s=1\).
Thus, evaluating at \(z=1\),
the polylogarithmic gamma functions
recover the arithmetic constants
introduced earlier.
\begin{theorem}[Integral representation of the polylogarithmic constants]
The identity
\[
C_s=\log G_s(1)+\gamma
\]
yields a meromorphic continuation of
\(G_s(1)\)
to \(s\in\mathbb C\setminus\{1\}\).
For \(\Re(s)>1\),
\[
C_s
=
\frac1{\Gamma(s-1)}
\int_0^1
\frac{\log\Gamma(1-t)}{t}
(-\log t)^{s-2}
\,dt .
\]
\end{theorem}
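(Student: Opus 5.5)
The plan has two parts: the continuation statement, which follows from earlier results, and the integral formula, which is Theorem~3.3 at $z=1$ together with one extra elementary integral that absorbs the $-\gamma w$ term.

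\emph{Continuation.} By Definition~2.2 we have $C_s=\gamma+\mathcal Z_s(1)$ for $\Re(s)>1$, and by (3.1) $\log G_s(1)=\mathcal Z_s(1)$ in this range, understood as the radial limit in Remark~3.4. Proposition~2.3 gives a meromorphic continuation of $C_s$ to all of $\mathbb C$, with only a simple pole at $s=1$. We then define $\log G_s(1):=C_s-\gamma$ on $\mathbb C\setminus\{1\}$ and set $G_s(1)=\exp(C_s-\gamma)$. This function is holomorphic on $\mathbb C\setminus\{1\}$ and coincides with the original $G_s(1)$ for $\Re(s)>1$.

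\emph{Integral formula.} Put $z=1$ in Theorem~3.3, so $\log(z/w)=-\log t$. This gives
\[
\log G_s(1)=\frac1{\Gamma(s-1)}\int_0^1\frac{\log\Gamma(1-t)}{t}(-\log t)^{s-2}\,dt-\frac{\gamma}{\Gamma(s-1)}\int_0^1(-\log t)^{s-2}\,dt .
\]
To evaluate the second integral, substitute $t=e^{-u}$:
\[
\int_0^1(-\log t)^{s-2}\,dt=\int_0^\infty e^{-u}u^{s-2}\,du=\Gamma(s-1).
\]
This is finite for $\Re(s)>1$, so the second term equals exactly $-\gamma$. Adding $\gamma$ to both sides then yields the stated formula for $C_s$.

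\emph{Main obstacle.} The delicate step is splitting the integral of Theorem~3.3 into two pieces, because the piece $\log\Gamma(1-t)/t$ is no longer $O(t)$ near $0$. Both pieces must be shown convergent separately.

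At $t\to0$: $\log\Gamma(1-t)/t\to\gamma$ is bounded, and $(-\log t)^{\Re s-2}$ is integrable near $0$ for every $s$, since it grows only logarithmically.

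At $t\to1$: $\log\Gamma(1-t)=-\log(1-t)+O(1)$ and $-\log t\sim 1-t$. The integrand is therefore $O\bigl(|\log(1-t)|(1-t)^{\Re s-2}\bigr)$, which is integrable exactly when $\Re(s)>1$, as in the proof of Theorem~3.3. The $\gamma$-piece is harmless there.

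With both pieces absolutely convergent, splitting by linearity is justified and the proof is complete. The identification of the $z\to1^-$ limit with the $z=1$ integral follows from dominated convergence, using the same bounds.
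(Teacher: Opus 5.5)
Your proposal is correct and follows the paper's proof essentially step for step: identify $C_s=\log G_s(1)+\gamma$ via Definition~2.2 and Proposition~2.3, pass to $z\to1^-$ in Theorem~3.3, split off the $\gamma w$ term, and evaluate $\int_0^1(-\log t)^{s-2}\,dt=\Gamma(s-1)$. Your separate convergence checks for the two pieces and your dominated-convergence justification of the radial limit are details the paper leaves implicit, and they are sound.
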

\begin{proof}
By Definition~2.2 and the defining series
for \(G_s(z)\),
evaluating at \(z=1\) gives
\[
C_s=\log G_s(1)+\gamma.
\]
By Theorem~3.3 and Remark~3.4,
the radial limit \(z\to1^{-}\) exists and yields
\[
\log G_s(1)
=
\frac1{\Gamma(s-1)}
\int_0^1
\frac{
\log\Gamma(1-t)-\gamma t
}{t}
(-\log t)^{s-2}
\,dt .
\]
Hence
\[
\begin{aligned}
\log G_s(1)
&=
\frac1{\Gamma(s-1)}
\int_0^1
\frac{\log\Gamma(1-t)}{t}
(-\log t)^{s-2}
\,dt
\\
&\quad
-
\frac{\gamma}{\Gamma(s-1)}
\int_0^1
(-\log t)^{s-2}
\,dt .
\end{aligned}
\]
Using
\[
\int_0^1
(-\log t)^{s-2}
\,dt
=
\Gamma(s-1),
\]
we obtain
\[
\log G_s(1)
=
\frac1{\Gamma(s-1)}
\int_0^1
\frac{\log\Gamma(1-t)}{t}
(-\log t)^{s-2}
\,dt
-\gamma .
\]
Combining this with the identity
\(
C_s=\log G_s(1)+\gamma
\),
we obtain the asserted integral representation.
\end{proof}
%
%
\subsection{Comparison with the Kinkelin function}
The present construction may be compared
with logarithmic gamma integrals
appearing in the theory of
the Kinkelin function
and Barnes-type multiple gamma functions
(cf.~\cite{barnes1901,barnes1904,vardi1988,ruijsenaars2000,kurokawa2007}).
Recall that the Kinkelin function,
which is closely related to the Barnes \(G\)-function,
admits the logarithmic integral representation
\[
\log G(x)
:=
\int_0^x \log\Gamma(t)\,dt
+
\frac{x(x-1)}2
-
\frac{x}{2}\log(2\pi),
\]
whereas the present function satisfies
\[
\log G_2(x)
=
\int_0^x
\frac{
\log\Gamma(1-t)-\gamma t
}{t}
\,dt .
\]
Thus,
both constructions arise from
logarithmic gamma integrals,
although the present construction
contains a singular logarithmic kernel
reflecting the underlying polylogarithmic structure.
This analogy may be summarized schematically as
\[
\log G(x)
\qquad\leadsto\qquad
\log G_2(x),
\]
\[
\int_0^x \log\Gamma(t)\,dt
\qquad\leadsto\qquad
\int_0^x
\frac{\log\Gamma(1-t)}{t}\,dt.
\]
At the level of special values,
Raabe's formula
(cf.~\cite{raabe1844}),
\[
\int_0^1 \log\Gamma(x)\,dx
=
\frac12\log(2\pi)
\]
corresponds to
\[
\int_0^1
\frac{\log\Gamma(1-x)}{x}\,dx
=
C_2.
\]
Symbolically,
\[
\frac12\log(2\pi)
\qquad\leadsto\qquad
C_2.
\]
In this sense,
the constants \(C_s\)
and the functions \(G_s(z)\)
may be viewed as
polylogarithmic analogues
of classical logarithmic gamma constructions.
The analogy with the Kinkelin function
should be understood at the level of
logarithmic gamma integrals
rather than factorial-type difference equations.

\subsection{The value at \(z=-1\)}
Evaluating at \(z=-1\) gives rise to
an alternating analogue
of the constants \(C_m\).
Indeed, for \(\Re(s)>1\),
\[
\log G_s(-1)
=
\sum_{r=2}^{\infty}
\frac{(-1)^r\zeta(r)}{r^s}.
\]
Rewriting the series, we obtain
\[
\log G_s(-1)
=
1-\eta(s)
+
\sum_{r=2}^{\infty}
\frac{(-1)^r(\zeta(r)-1)}{r^s},
\]
where \(\eta(s)\) denotes the Dirichlet eta function.
Since \(\eta(s)\) is entire and
\(\zeta(r)-1=O(2^{-r})\),
the right-hand side defines an entire function of \(s\).
Thus \(\log G_s(-1)\)
extends holomorphically
to the whole complex \(s\)-plane.

For integers \(m\ge2\),
define 
alternating analogues of the constants \(C_m\)
\[
\widehat C_m
:=
-
\lim_{n\to\infty}
\left(
\sum_{k=1}^n
\operatorname{Li}_m\!\left(-\frac1k\right)
+
\log n
\right).
\]
Expanding the polylogarithm,
we obtain
\[
\operatorname{Li}_m\!\left(-\frac1k\right)
=
-\frac1k
+
\sum_{r=2}^{\infty}
\frac{(-1)^r}{r^m k^r},
\]
and hence
\[
\widehat C_m
=
\gamma
-
\sum_{r=2}^{\infty}
\frac{(-1)^r\zeta(r)}{r^m}.
\]
Consequently,
\[
\log G_m(-1)
=
\gamma-\widehat C_m.
\]
Thus the values \(G_m(-1)\)
may be interpreted
as alternating counterparts
of the values \(G_m(1)\).
In particular,
since
\[
G_1(z)
=
e^{-\gamma z}\Gamma(1-z),
\]
we recover the classical identity 
(cf.~\cite[1.17(3)]{erdelyi1953}):
\[
\log G_1(-1)
=
\sum_{r=2}^{\infty}
\frac{(-1)^r \zeta(r)}{r}
=
\gamma.
\]
Equivalently,
\[
G_1(-1)=e^\gamma .
\]
%
%
\section{Arithmetic Properties of the Constants \(C_m\)}
\subsection{Asymptotic and summation properties}
Although explicit evaluations of the individual constants
\(C_m\)
appear difficult in general,
the family
\(\{C_m\}_{m\ge2}\)
exhibits several remarkable collective structures.
These include asymptotic properties,
summation identities,
and connections with shifted Stieltjes-type constants.

We begin by investigating
the asymptotic behaviour of \(C_m\)
as \(m\to\infty\).
The identity in Theorem 3.6,
\[
C_m-\gamma=\log G_m(1),
\]
shows that the asymptotic behaviour of
\(C_m-\gamma\)
is equivalent to that of
\(\log G_m(1)\).
\begin{proposition}[Asymptotic and summation formulas for \(C_m\)]
The polylogarithmic constants \(C_m\)
satisfy the asymptotic relation
\[
C_m-\gamma
\;\sim\;
\frac{\zeta(2)}{2^{m}}
\qquad (m\to\infty).
\]
Moreover,
\[
\begin{aligned}
&\sum_{m=2}^{\infty}(C_m-\gamma)
\;=\;
\gamma+\gamma_1(0)-\gamma_1(1),
\\
&\sum_{m=2}^{\infty}(-1)^m(C_m-\gamma)
\;=\;
\log\sqrt{2\pi}-\frac{\gamma}{2}.
\end{aligned}
\]
Here
\[
\gamma_1(a)
:=
\lim_{n\to\infty}
\left(
\sum_{k=1}^{n}\frac{\log k}{k+a}
-\frac12\log^2 n
\right)
\]
denotes a shifted analogue
of the first Stieltjes constant.
In particular,
\(\gamma_1(0)\)
coincides with the classical first Stieltjes constant \(\gamma_1\).
\end{proposition}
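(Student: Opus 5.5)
We start from the series representation $C_m-\gamma=\sum_{r\ge2}\zeta(r)r^{-m}$ given by Definition~2.2 (equivalently, $\log G_m(1)$ as in Theorem~3.6). All three claims then reduce to manipulations of this absolutely convergent double structure.

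\emph{Asymptotics.} Factor out the first term:
\[
C_m-\gamma=\frac{\zeta(2)}{2^m}\Bigl(1+\sum_{r\ge3}\frac{\zeta(r)}{\zeta(2)}\bigl(\tfrac{2}{r}\bigr)^m\Bigr).
\]
Since $\zeta(r)\le\zeta(3)$ for $r\ge3$, the tail is bounded by a constant times $(2/3)^m+\sum_{r\ge4}(2/r)^m$. This tends to $0$ by dominated convergence, because for $m\ge4$ each term is dominated by $(2/r)^4$, which is summable. Hence the bracket tends to $1$.

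\emph{Alternating sum.} Interchange the sums, which is legitimate by absolute convergence since $\sum_{r,m\ge2}\zeta(r)r^{-m}=\sum_r\zeta(r)/(r(r-1))<\infty$. The geometric series gives
\[
\sum_{m\ge2}(-1)^m r^{-m}=\frac{1}{r(r+1)}=\frac1r-\frac1{r+1}.
\]
So we need $S_-=\sum_{r\ge2}\zeta(r)\bigl(\frac1r-\frac1{r+1}\bigr)$. By (2.4) at $z\to1^-$, the piece $\sum\zeta(r)/r$ diverges, so we handle the difference inside a limit:
\[
S_-=\lim_{z\to1^-}\Bigl[\mathcal Z_1(z)-\frac1z\int_0^z\mathcal Z_1(w)\,dw\Bigr].
\]
Here we used $\sum\zeta(r)z^{r+1}/(r(r+1))=\int_0^z\mathcal Z_1$ and $\mathcal Z_1=\log\Gamma(1-z)-\gamma z$. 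The integral $\int_0^1\log\Gamma(1-w)\,dw=\log\sqrt{2\pi}$ is Raabe's formula, and $\int_0^1\gamma w\,dw=\gamma/2$. The main issue is that $\mathcal Z_1(z)\to\infty$ while the integral stays finite. To fix this, I would instead rewrite each term as $\zeta(r)\frac{1}{r(r+1)}=\zeta(r)\int_0^1 w^{r-1}(1-w)\,dw$. This gives
\[
S_-=\int_0^1 \frac{(1-w)\,\mathcal Z_1(w)}{w}\,dw=\int_0^1\frac{\mathcal Z_1(w)}{w}\,dw-\int_0^1\mathcal Z_1(w)\,dw .
\]
Both integrals now converge, and the first equals $C_2-\gamma$. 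Comparing with the direct route, it is cleaner still to use $\sum_r\zeta(r)/(r(r+1))=\sum_n\sum_r n^{-r}/(r(r+1))$ summed in closed form. I expect the result to collapse to $\log\sqrt{2\pi}-\gamma/2$ once it is checked against Raabe's formula together with the expansion of $\log\Gamma(1-w)$; this bookkeeping is where care is needed.

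\emph{Plain sum.} Similarly, $\sum_{m\ge2}r^{-m}=\frac{1}{r(r-1)}$. Using $\zeta(r)=\sum_n n^{-r}$ and swapping sums, we get
\[
\sum_{n\ge1}\phi(1/n),\qquad \phi(x)=\sum_{r\ge2}\frac{x^r}{r(r-1)}=(1-x)\log(1-x)+x.
\]
The $n=1$ term equals $1$. For $n\ge2$, we have $\phi(1/n)=\frac1n-\frac{n-1}{n}\log\frac{n}{n-1}$. We then form partial sums up to $N$ and expand
\[
\frac{n-1}{n}\bigl(\log n-\log(n-1)\bigr)=\log n-\log(n-1)-\frac{\log n}{n}+\frac{\log(n-1)}{n}.
\]
After telescoping, the partial sums involve $H_N$, $\log N$, $\sum_{n\le N}\log n/n$, and $\sum_{k\le N-1}\log k/(k+1)$. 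The $\frac12\log^2N$ divergences cancel, and matching these sums against the definition of $\gamma_1(0)$ and $\gamma_1(1)$ yields $\gamma+\gamma_1(0)-\gamma_1(1)$. The main obstacle is this careful tracking of boundary terms ($N$ versus $N-1$, and the $\log N$ pieces) in the limit. I would verify each cancellation explicitly, using $\sum_{k\le N-1}\frac{\log k}{k+1}-\sum_{k\le N}\frac{\log k}{k+1}=O(\log N/N)$.
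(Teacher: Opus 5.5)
\textbf{Verdict: genuine gap in the alternating sum.} Your asymptotic argument is fine. Your plain-sum argument is also fine and is essentially the paper's route: closed form for $\sum_{r\ge2}x^r/(r(r-1))$, then telescoping partial sums. You match $H_N-\log N+\sum_{n\le N}\frac{\log n}{n}-\sum_{k\le N-1}\frac{\log k}{k+1}$ directly against the defining limits of $\gamma_1(0)$ and $\gamma_1(1)$, where the paper passes through $\sum_k\log k/(k(k+1))$; the two are equivalent.

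The gap is that you never evaluate $S_-$, and both integral formulas you write for it are wrong.

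\emph{The limit formula.} Since $\int_0^z\mathcal Z_1(w)\,dw=\sum_r\zeta(r)z^{r+1}/(r(r+1))$, the quantity $\frac1z\int_0^z\mathcal Z_1$ is the generating function of $S_-$ itself, not of $\sum_r\zeta(r)z^r/(r+1)$. Your bracket $\mathcal Z_1(z)-\frac1z\int_0^z\mathcal Z_1$ therefore equals $\sum_r\zeta(r)z^r/(r+1)$. Its divergence as $z\to1^-$ signals a wrong identity, not a delicate cancellation.

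\emph{The ``fix''.} Using $\frac1{r(r+1)}=\int_0^1w^{r-1}(1-w)\,dw$ gives $S_-=\int_0^1(1-w)\sum_r\zeta(r)w^{r-1}\,dw=\int_0^1(1-w)\mathcal Z_1'(w)\,dw$. But $\mathcal Z_1(w)/w=\sum_r\frac{\zeta(r)}{r}w^{r-1}$ carries an extra factor $1/r$, so
\[
\int_0^1\frac{(1-w)\mathcal Z_1(w)}{w}\,dw=\sum_{r\ge2}\frac{\zeta(r)}{r^2(r+1)}=(C_2-\gamma)-S_- .
\]
Since $\int_0^1\mathcal Z_1=S_-$ (see below), your formula amounts to $S_-=(C_2-\gamma)/2$. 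That is false: it gives $\approx0.418$, against the true value $\approx0.630$.

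\emph{The ending.} ``I expect the result to collapse\dots'' is not a derivation. At this point the paper actually carries out the closed form for $\sum_{r\ge2}x^r/(r(r+1))$, the partial sums over $k$, and Stirling's formula.

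\textbf{Repair.} You already had the key identity. The terms $\zeta(r)w^r/r$ are nonnegative on $[0,1)$, so monotone convergence gives
\[
S_-=\sum_{r\ge2}\frac{\zeta(r)}{r}\int_0^1w^r\,dw=\int_0^1\mathcal Z_1(w)\,dw=\int_0^1\log\Gamma(1-w)\,dw-\frac{\gamma}{2}=\log\sqrt{2\pi}-\frac{\gamma}{2}.
\]
The last step is Raabe's formula after the substitution $w\mapsto1-w$; the logarithmic singularity at $w=1$ is integrable. Equivalently, integrate $\int_0^1(1-w)\mathcal Z_1'(w)\,dw$ by parts, using $(1-w)\mathcal Z_1(w)\to0$ as $w\to1^-$.

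This is a genuinely shorter route than the paper's, since Raabe's formula replaces the partial-sum bookkeeping and Stirling's formula. It also fits the ``Raabe $\leadsto C_2$'' analogy of Section~3.3. As submitted, however, the third identity is not proved.
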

\begin{remark}
The summation identities above
may also be obtained
as special cases of Theorem~2.5
in \cite{noda2026dirichlet}.
For completeness,
we provide a direct proof
in the present setting.
\end{remark}
\begin{proof}
By the series representation (2.3) of \(C_m\), we have
\[
C_m-\gamma
=
\sum_{r=2}^{\infty}
\frac{\zeta(r)}{r^m}.
\]
Hence
\[
C_m-\gamma
=
\frac{\zeta(2)}{2^m}
+
\sum_{r=3}^{\infty}
\frac{\zeta(r)}{r^m}
=
\zeta(2)\cdot 2^{-m}
+
O(3^{-m}),
\]
which establishes the asymptotic formula.

We now prove the summation identities.
By absolute convergence, we may interchange the order
of summation and obtain
\[
\sum_{m=2}^{\infty}(C_m-\gamma)
=
\sum_{r=2}^{\infty}
\zeta(r)
\sum_{m=2}^{\infty}\frac1{r^m}
=
\sum_{r=2}^{\infty}
\frac{\zeta(r)}{r(r-1)}.
\]
Using the Dirichlet series expansion \(\zeta(r)\), we obtain
\[
\sum_{r=2}^{\infty}
\frac{\zeta(r)}{r(r-1)}
=
\sum_{k=1}^{\infty}
\sum_{r=2}^{\infty}
\frac{1}{r(r-1)k^r}.
\]
For \(0<x\le1\), we have
\[
\sum_{r=2}^{\infty}
\frac{x^r}{r(r-1)}
=
x+(1-x)\log(1-x),
\]
where the value at \(x=1\) is understood as a limit.
Therefore
\[
\sum_{m=2}^{\infty}(C_m-\gamma)
=
1+
\sum_{k=2}^{\infty}
\left\{
\frac1k
+
\frac{k-1}{k}\log\left(1-\frac1k\right)
\right\}.
\]
Taking partial sums up to \(N\), the right-hand side is
\[
H_N
+
\sum_{k=1}^{N-1}
\frac{\log k}{k(k+1)}
-
\frac{N-1}{N}\log N.
\]
Letting \(N\to\infty\), we obtain
\[
\sum_{m=2}^{\infty}(C_m-\gamma)
=
\gamma
+
\sum_{k=1}^{\infty}
\frac{\log k}{k(k+1)}.
\]
On the other hand, from the definition of \(\gamma_1(a)\),
\[
\gamma_1(0)-\gamma_1(1)
=
\sum_{k=1}^{\infty}
\left(
\frac{\log k}{k}
-
\frac{\log k}{k+1}
\right)
=
\sum_{k=1}^{\infty}
\frac{\log k}{k(k+1)}.
\]
Thus
\[
\sum_{m=2}^{\infty}(C_m-\gamma)
=
\gamma+\gamma_1(0)-\gamma_1(1).
\]

It remains to prove the alternating identity. Again by absolute
convergence,
\[
\sum_{m=2}^{\infty}(-1)^m(C_m-\gamma)
=
\sum_{r=2}^{\infty}
\zeta(r)
\sum_{m=2}^{\infty}
\left(-\frac1r\right)^m
=
\sum_{r=2}^{\infty}
\frac{\zeta(r)}{r(r+1)}.
\]
Using the Dirichlet series expansion \(\zeta(r)\), we obtain
\[
\sum_{r=2}^{\infty}
\frac{\zeta(r)}{r(r+1)}
=
\sum_{k=1}^{\infty}
\sum_{r=2}^{\infty}
\frac{1}{r(r+1)k^r}.
\]
For \(0<x\le1\), one has
\[
\sum_{r=2}^{\infty}
\frac{x^r}{r(r+1)}
=
\frac{x}{2}
+
\frac{1-x}{x}\log(1-x)
+
1-x,
\]
again with the value at \(x=1\) understood as a limit.
It follows that
\[
\sum_{m=2}^{\infty}(-1)^m(C_m-\gamma)
=
\frac12
+
\sum_{k=2}^{\infty}
\left\{
\frac1{2k}
+
(k-1)\log\left(1-\frac1k\right)
+
1-\frac1k
\right\}.
\]
Taking partial sums and simplifying, we find
\[
\sum_{r=2}^{\infty}
\frac{\zeta(r)}{r(r+1)}
=
\lim_{N\to\infty}
\left\{
N
-
\frac12 H_N
+
\log N!
-
N\log N
\right\}.
\]
By Stirling's formula,
\[
\log N!
=
N\log N-N+\frac12\log N+\log\sqrt{2\pi}+o(1),
\]
and since \(H_N=\log N+\gamma+o(1)\), this gives
\[
\sum_{m=2}^{\infty}(-1)^m(C_m-\gamma)
=
\log\sqrt{2\pi}-\frac{\gamma}{2}.
\]
This completes the proof.
\end{proof}
%
%
\subsection{Relation with Dirichlet series}
We recall the Dirichlet series
studied in \cite{noda2026dirichlet}:
\[
D(s_1,s_2;\lambda)
=
\sum_{k=1}^{\infty}
\frac{\zeta(s_1+k)-1}{(k+\lambda)^{s_2}}.
\]
This series may be regarded
as a generating function
for shifted zeta values.
The classical identity
\[
D(1,1;1)=1-\gamma
\]
goes back to Euler~\cite{euler1785}.
More recently,
special values
\(D(1,1;\mu)\)
for \(\mu=2,3\)
were evaluated by
Srivastava~\cite[(5.3)]{srivastava1988}
and
Choi--Srivastava~\cite[(4.40)]{choi-srivastava1999},
in terms of
the Stirling constant
and the Glaisher--Kinkelin constant:
\[
\begin{aligned}
D(1,1;2)
&=
\frac32
-
\frac{\gamma}{2}
-
\log A_0,
\\
D(1,1;3)
&=
\frac{11}{6}
-
\frac{\gamma}{3}
-
\log A_0
-
2\log A_1.
\end{aligned}
\]
Here
\[
A_0=\sqrt{2\pi}
\]
denotes the Stirling constant,
while \(A_1\)
is the Glaisher--Kinkelin constant.
More generally,
the constants \(A_j\) (\(j\ge2\))
are known as the Bendersky constants
(cf.~\cite{bendersky1933}).
In \cite{noda2026dirichlet},
the following general formula was obtained:
\[
D(1,1;\mu)
=
H_\mu
-
\frac{\gamma}{\mu}
-
\sum_{j=0}^{\mu-2}
\binom{\mu-1}{j}\log A_j.
\]
We now present several examples
for higher values \(m\ge2\).
\begin{example}
For \(m\ge2\),
we have
\[
\begin{aligned}
D(1,m;1)
&=
(C_m-\gamma)-(\zeta(m)-1),
\\
D(2,m;2)
&=
(C_m-\gamma)-(\zeta(m)-1)
-\frac{\zeta(2)-1}{2^m}.
\end{aligned}
\]
\end{example}
\begin{example}
Let
\[
I_0
:=
\int_0^1
\log\Gamma(x)\log(1-x)\,dx,
\qquad
I_1
:=
\int_0^1
x\log\Gamma(x)\log(1-x)\,dx.
\]
Then
\[
\begin{aligned}
D(1,2;2)
={}&
I_0
+
\log A_0
-
\frac{\gamma}{4}
+
\frac54
-
\zeta(2),
\\
D(1,2;3)
={}&
2I_0
-
2I_1
+
\frac12\log A_0
+
\log A_1
-
\frac{\gamma}{9}
+
\frac{49}{36}
-
\zeta(2).
\end{aligned}
\]
\end{example}
The latter identities may be derived
from classical generating function techniques
associated with the digamma function.
These examples are related to
logarithmic gamma moments.
For \(m\ge0\), let
\[
M_m
:=
\int_0^1 x^m\log\Gamma(x)\,dx.
\]
The moments \(M_m\) are naturally connected
with the constants \(C_m\).
\begin{example}
For the logarithmic gamma moments \(M_m\),
the constant \(C_2\) admits the representation
\[
C_2
=
\sum_{m=0}^{\infty} M_m.
\]
\end{example}
%
%
%
\section*{Concluding Remarks}
In this paper,
we introduced
polylogarithmic analogues
of Euler's constant
together with a family of
polylogarithmic gamma functions.
The resulting family
admits recursive integral structures
closely related to
logarithmic gamma integrals
appearing in the theory of
the Barnes \(G\)-function
and related logarithmic gamma constructions.

The present framework
is also related to
Dirichlet series involving shifted zeta values,
which arise naturally in the study of
generalized Euler-type constants.
At the same time,
the associated integral representations
give rise to new logarithmic gamma moments,
suggesting further analogies
with Barnes-type logarithmic gamma constructions.

Several problems remain open.
Among them are the detailed study
of the branch singularity structure
of the functions $\mathcal Z_s(z)$,
and the investigation of further
polylogarithmic operator hierarchies
generated from logarithmic special functions.
These questions suggest that
the present constructions
may represent only the first layer
of a broader theory
linking polylogarithms,
zeta values,
and logarithmic gamma hierarchies.

\medskip

Finally, we note an operator-theoretic interpretation
of the polylogarithmic gamma family constructed in this paper.
Since
\[
(\mathcal I z^r)(z)
=
\frac{z^r}{r},
\]
the zeta potentials satisfy the formal relation
\[
\mathcal Z_s
=
\mathcal I^s \mathcal Z_0.
\]
Recalling (2.5),
\[
\mathcal Z_1(z)
=
\log\Gamma(1-z)-\gamma z,
\]
we obtain
\[
\log G_s(z)
=
\mathcal Z_s(z)
=
\mathcal I^{\,s-1}\mathcal Z_1(z),
\]
where \(\mathcal I^{\,s-1}\)
is understood through
the complex-order integral representation
established in Section~3.
Together with the identity in Theorem~3.6
\[
C_s-\gamma
=
\mathcal Z_s(1),
\]
these relations yield the following formal scheme:
\[
\mathcal Z_0
\xrightarrow{\ \mathcal I\ }
\mathcal Z_1
=
\log\Gamma(1-z)-\gamma z
\xrightarrow{\ \mathcal I^{\,s-1}\ }
\log G_s
=
\mathcal Z_s
\xrightarrow{\ z=1\ }
C_s-\gamma.
\]
This viewpoint suggests that the hierarchy
is generated by repeated application
of 
the inverse of the Euler differential operator
\[
\mathcal I
=
\left(
z\frac{d}{dz}
\right)^{-1}.
\]
It is also natural to consider
the hierarchy at non-positive indices.
Indeed,
\[
\log G_0(z)
=
-z\psi(1-z)-\gamma z,
\]
showing that the level \(s=0\)
is naturally expressed
in terms of the digamma function
$\psi(z)$.
More generally,
one may expect the functions \(G_{-m}\)
to be related to higher polygamma functions.
This operator-theoretic viewpoint
deserves further investigation.


\end{document}